\documentclass{amsart}

\usepackage{rorabaugh}

\title{Graph cover-saturation}
\author{Danny Rorabaugh, Queen's University}

\newcommand{\cs}{\operatorname{csat}}
\renewcommand{\ss}{\operatorname{ssat}}
\newcommand{\ws}{\operatorname{wsat}}
\newcommand{\sat}{\operatorname{sat}}
\newcommand{\ex}{\operatorname{ex}}
\newcommand{\Cs}{\operatorname{Csat}}
\newcommand{\Sat}{\operatorname{Sat}}
\newcommand{\Ex}{\operatorname{Ex}}
\renewcommand{\o}[1]{\overline{#1}}
\renewcommand{\u}[1]{\underline{#1}}

\begin{document}

\begin{abstract}
	Graph $G$ is \emph{$F$-saturated} if $G$ contains no copy of graph $F$ but any edge added to $G$ produces at least one copy of $F$. 
	One common variant of saturation is to remove the former restriction: $G$ is \emph{$F$-semi-saturated} if any edge added to $G$ produces at least one new copy of $F$. 
	In this paper we take this idea one step further. 
	Rather than just allowing edges of $G$ to be in a copy of $F$, we require it: $G$ is \emph{$F$-covered} if every edge of $G$ is in a copy of $F$. 
	It turns out that there is smooth interaction between coverage and semi-saturation, which opens for investigation a natural analogue to saturation numbers. 
	Therefore we present preliminary cover-saturation theory and structural bounds for the cover-saturation numbers of graphs. 
	We also establish asymptotic cover-saturation densities for cliques and paths, and upper and lower bounds (with small gaps) for cycles and stars. 
\end{abstract}

 \date{\today}
 
\keywords{Graph coverage; graph saturation; saturation number}

\maketitle

\section{Introduction to Saturation}

We begin with a brief introduction to graph saturation before defining the concept of coverage and its corresponding saturation variant. 
Section~2 establishes preliminary theory for cover-saturation, then we prove several structural bounds on cover-saturation numbers in Section~3. 
Section~4, the final main part of this paper, investigates cover-saturation numbers for specific classes of graphs, including paths, cycles, and stars. 
We end with a discussion of several open directions for further study. 
All graphs are assumed to be simple, finite, and undirected. 

\begin{defn}
	Graph $G$ is \emph{$F$-free} provided there is no subgraph of $G$ isomorphic to graph $F$. 
	We say $G$ is \emph{$F$-saturated} if $G$ is $F$-free and $G+e$ is not $F$-free for any edge $e$ in the complement of $G$. 
	
	The \emph{saturation number of $F$}, $\sat(n,F)$, is the fewest number of edges in an $F$-saturated graph on $n$ vertices. 
\end{defn}

The first result in graph saturation was given by Alexander Zykov~\cite{Zy-49} in Russian in 1949 and independently by Erd\H{o}s, Hajnal, and Moon~\cite{EHM-64} in English in 1964. 
They found the saturation number of a clique:
\begin{equation*}
	\sat(n,K_r) = (r-2)n - \binom{r-1}{2}.
\end{equation*}


\subsection{Pseudo-Saturation}
Perhaps the first variation of saturation to be studied was weak saturation, introduced in 1967 by B\'ela Bollob\'as~\cite{Bo-68}. 
It is closely related to bootstrap percolation. 
\begin{defn}
	Graph $G$ is \emph{weakly $F$-saturated} provided the edges of the complement of $G$ can be ordered   $e_1, e_2, \ldots, e_{\ell}$ so that when we add the edges to $G$ one at a time, $G_0 = G$ and $G_i = G_{i-1} + e_i$ for $1\leq i \leq \ell$, then the number of copies of graph $F$ in $G_{i+1}$ is strictly greater than the number of copies of $F$ in $G_i$ for all $i <\ell$. 
	
	The \emph{weak saturation number of $F$}, $\ws(n,F)$, is the fewest number of edges in a weakly $F$-saturated graph on $n$ vertices. 
\end{defn}	

The most relevant variant of saturation to the present work is what was originally called strong saturation. 
However, some authors~(e.g.,~\cite{KS-16} and~\cite{MS-15}) have used the phrase ``strong saturation'' in reference to the usual saturation simply to contrast it with weak saturation. 
Thus, to avoid ambiguity, we follow the example of F\"uredi and Kim~\cite{FK-12} and use ``semi-saturation'' in place of ``strong saturation.''
\begin{defn}
	Graph $G$ is \emph{$F$-semi-saturated} if for any edge $e$ in the complement of $G$ the graph $G+e$ contains more copies of graph $F$ than are in $G$. 
	
	The \emph{semi-saturation number of $F$}, $\ss(n,F)$, is the fewest number of edges in an $F$-semi-saturated graph on $n$ vertices. 
\end{defn}

With this definition, we can restate the definition of $F$-saturation simply as: $F$-free and $F$-semi-saturated. 

\begin{exmp} \label{sat}
\begin{enumerate}[(a)]
	\item The Tur\'an graph, or balanced complete $r$-partite graph, $T(n,r)$ is $K_{r+1}$-saturated for every $n \geq r$. 
	\item Every graph $F$ is weakly $K_2$-saturated.
	\item The clique $K_n$ is vacuously $F$-semi-saturated for every $F$.
\end{enumerate}
\end{exmp}

See the 2011 survey by Faudree, Faudree, and Schmitt~\cite{FFS-11} for a more comprehensive coverage of known results and open problems about saturation and several variations thereof. 

\subsection{Anti-Saturation}

It is also worth mentioning Tur\'an theory, to which saturation is considered a dual or opposite theory. 
Whereas $\sat(n,F)$ is the minimum number of edges in an $n$-vertex $F$-saturated graph, 
 the \emph{extremal number} $\ex(n,F)$ is the maximum number of edges in an $n$-vertex $F$-saturated graph. 
In 1941, P\'al Tur\'an~\cite{Tu-41} proved (with different notation) that $\lim_{n\rightarrow \infty} \ex(n,K_r)/\binom{n}{2} = \frac{r-1}{r}$. 
This was generalized for all graphs in 1946 by Erd\H{o}s and Stone~\cite{ES-46}, 
 who proved that $\lim_{n\rightarrow \infty} \ex(n,F) / \binom{n}{2} = \frac{\chi(F)-2}{\chi(F)-1}$, where $\chi(F)$ is the chromatic number of $F$. 
 
So the extremal number of a (non-bipartite) graph is on the order of $n^2$, but $\sat$, $\ws$, and $\ss$ are on the order of $n$ (or bounded) for every graphs. 
As stated by Zsolt Tuza~\cite{Tu-92} in 1992: ``in contrast with the Tur\'an numbers (in which the chromatic number as a ``global parameter'' is essential), the growths of [saturation numbers] depend on some \emph{local} parameters[...]."

\subsection{Asymptotics}

Since saturation numbers of $F$ grow no faster than some constant multiple of $n$, 
 it is natural to divide by $n$ and take the limit, but that limit is not known to always exist.
Zsolt Tuza~\cite{Tu-88} conjectured in 1988 that $\lim_{n\to\infty} \frac{\sat(n,F)}{n}$ exists for all $n$. 
For convenience, we will use $\sat(F)$ when this limit exists and $\u{\sat}(F)$ and $\o{\sat}(F)$ for the limit infimum and limit supremum, respectively. 
In 1991, Truszczy\'nski and Tuza~\cite{TT-91} made the following progress toward the latter's conjecture: 
 If $\u{\sat}(F) < 1$, then $\sat(F) = 1 - \frac{1}{p}$ for some positive integer $p$. 
They also gave a characterization of all such graphs.

\section{Coverage and Saturation}

The idea of graph semi-saturation was to lift one of the restrictions imposed by saturation: 
 The edges of an $F$-semi-saturated graph are allowed to be in a copy of $F$. 
Here we consider a concept we call coverage, where edges are not only allowed but required to be in a copy of $F$. 
This leads to a theory of cover-saturation and an analogous saturation number. 

\begin{defn}
Graph $G$ is \emph{$F$-covered} provided every edge of $G$ is in a subgraph of $G$ isomorphic to graph $F$. 
\end{defn}

\begin{exmp} \label{cov}
	The clique $K_n$ is $F$-covered for any graph $F$ with at least one edge and at most $n$ vertices.
\end{exmp}

\begin{defn}
Graph $G$ is \emph{$F$-cov-sat} provided $G$ is both $F$-covered and $F$-semi-saturated. 
\end{defn}

\begin{exmp} \label{covsat}
Every graph with at least one edge is $K_2$-cov-sat. 
\end{exmp}

Henceforth, to avoid trivial counterexamples, we assume graphs have at least one edge. 
Before we introduce the cov-sat analogue of saturation numbers of graphs, let us make a few observations about coverage and saturation. 

\begin{fact} \label{trans}
\begin{enumerate}[(a)]
	\item Coverage is a transitive graph relation:\\ If $G$ is $F$-covered and $H$ is $G$-covered, then $H$ is $F$-covered.
	\item If $G$ is $F$-covered and $H$ is $G$-semi-saturated then $H$ is $F$-semi-saturated.
	\item From (a, b): If $G$ is $F$-covered and $H$ is $G$-cov-sat, then $H$ is $F$-cov-sat.
	\item From (c), cover-saturation is a transitive graph relation:\\ If $G$ is $F$-cov-sat and $H$ is $G$-cov-sat, then $H$ is $F$-cov-sat.
\end{enumerate}
\end{fact}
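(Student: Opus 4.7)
The plan is to prove the four parts in sequence, treating (a) and (b) as the substantive arguments and (c), (d) as immediate combinations.

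For part (a), I would fix an arbitrary edge $e$ of $H$. Because $H$ is $G$-dominated, there is a subgraph $G' \subseteq H$ isomorphic to $G$ that contains $e$; let $\varphi\colon G \to G'$ be such an isomorphism, and let $e' = \varphi^{-1}(e)$ be the corresponding edge of $G$. Since $G$ is $F$-dominated, $e'$ lies in some subgraph $F_0 \subseteq G$ with $F_0 \cong F$. The image $\varphi(F_0)$ is then a copy of $F$ sitting inside $G' \subseteq H$ and containing $e$, so $H$ is $F$-dominated.

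For part (b), consider any edge $e$ in the complement of $H$. Because $H$ is $G$-semi-saturated, $H+e$ has strictly more copies of $G$ than $H$, so at least one copy $G' \cong G$ in $H+e$ must use the edge $e$ (otherwise $G'$ would already be a subgraph of $H$). Now apply the hypothesis that $G$ is $F$-dominated to the edge in $G'$ corresponding to $e$: pulling back along the isomorphism $G' \cong G$, finding a copy of $F$ through that edge, and pushing forward, we obtain a copy of $F$ inside $G' \subseteq H+e$ that contains $e$. This copy of $F$ cannot be a subgraph of $H$, so $H+e$ has more copies of $F$ than $H$, proving that $H$ is $F$-semi-saturated.

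Part (c) follows by applying (a) and (b) simultaneously: if $H$ is $G$-dom-sat then $H$ is both $G$-dominated and $G$-semi-saturated, and under the hypothesis $G$ is $F$-dominated we conclude via (a) that $H$ is $F$-dominated and via (b) that $H$ is $F$-semi-saturated. Part (d) is immediate from (c), since $G$ being $F$-dom-sat in particular means $G$ is $F$-dominated.

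The only place that requires care is part (b), where one must verify two ``genuine novelty'' claims: that the new copy of $G$ appearing in $H+e$ actually uses the added edge $e$, and that the resulting copy of $F$ is new in $H+e$. Both follow from the same observation, that a subgraph containing $e$ cannot already have been in $H$; once this is articulated cleanly, the rest of the argument is a direct unwinding of the definitions with no further obstacle.
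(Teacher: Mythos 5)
Your proof is correct and is the natural unwinding of the definitions; the paper states this as a Fact without proof, and your argument supplies exactly the intended justification. You correctly isolate the one point needing care in part (b) — that a new copy of $G$ in $H+e$ must use $e$, and that the resulting copy of $F$ through $e$ is likewise new — so there is nothing to add.
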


\begin{fact} \label{deltaFlem} Let $\delta(G)$ denote the minimum degree of graph $G$. 
	\begin{enumerate}[(a)]
		\item If $G$ is $F$-covered and $\delta(G) \geq 1$, then $\delta(G) \geq \delta(F)$. 
		\item If $G$ is $F$-semi-saturated, then $\delta(G) \geq \delta(F) - 1$. 
	\end{enumerate}
\end{fact}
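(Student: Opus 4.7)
The plan for both parts is to pick a vertex $v$ realizing $\delta(G)$ and place it inside a copy of $F$; the degree inequality then falls out of comparing the degree of $v$ in that copy with $\deg_G(v)$.

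For part (a), the assumption $\delta(G) \geq 1$ supplies at least one edge $e = uv$ incident to $v$. Since $G$ is $F$-dominated, there is a subgraph $F' \subseteq G$ isomorphic to $F$ with $e \in E(F')$. In $F'$ the vertex $v$ has degree at least $\delta(F)$; because $F'$ is a subgraph of $G$, every such incident edge of $F'$ is already an edge of $G$, giving $\delta(G) = \deg_G(v) \geq \deg_{F'}(v) \geq \delta(F)$.

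For part (b), I would first handle the typical case in which $v$ has a non-neighbor $u$. The semi-saturation hypothesis then says $G + uv$ contains a copy of $F$ not present in $G$, and any such new copy is forced to use the edge $uv$. Inside this copy $v$ has degree at least $\delta(F)$; exactly one of those incident edges, namely $uv$, lies outside $E(G)$, so $\deg_G(v) \geq \delta(F) - 1$. If instead $v$ is universal in $G$, then $v$ realizing $\delta(G)$ forces $G$ to be a complete graph and $\delta(G) = |V(G)| - 1$, from which the desired inequality follows provided $|V(G)| \geq \delta(F)$.

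The main obstacle is exactly this universal-vertex case in (b): a small clique is vacuously $F$-semi-saturated and can have $\delta(G) < \delta(F) - 1$ (take $G = K_2$ with $F = K_4$, for instance). The statement therefore implicitly rests on $G$ being large enough relative to $F$, and the cleanest path forward is to add up front the standing hypothesis that $G$ contains at least one non-edge, or equivalently that $G$ is not a clique; once this is in place, the rest of the argument is a straightforward unpacking of the definitions of domination and semi-saturation.
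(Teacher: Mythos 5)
Your proof is correct and is surely the intended argument --- the paper states this as a Fact without proof, and your reasoning (locating a minimum-degree vertex inside a copy of $F$ guaranteed by domination, resp.\ inside the new copy forced to use an added non-edge) is the direct unpacking of the definitions. Your caveat about the universal-vertex case in (b) is also well taken: by the paper's own Example~\ref{sat}(c) a small clique such as $K_2$ is vacuously $K_4$-semi-saturated yet has $\delta(G) < \delta(F)-1$, so part (b) implicitly assumes $G$ is large enough relative to $F$ (which holds wherever the Fact is applied, since $\ds(n,F)$ is only defined for $n \geq |F|$, and then $\delta(K_n) = n-1 \geq \delta(F)$).
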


Semi-saturation and coverage also relate naturally to connectivity. 

\begin{lem} 
	If every connected component of $F$ is $k$-connected and $G$ is $F$-semi-saturated, then $G$ is $(k-1)$-connected.
\end{lem}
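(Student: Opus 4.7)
The plan is to argue by contradiction via Menger's theorem. Suppose $G$ is not $(k-1)$-connected. First I would dispose of the trivial case on vertex count: by Fact~\ref{deltaFlem}(b), $\delta(G) \geq \delta(F) - 1 \geq k-1$, which forces $|V(G)| \geq k$. So there must exist a cut set $S \subseteq V(G)$ with $|S| \leq k-2$ such that $G - S$ is disconnected. Pick $a$ and $b$ in different components of $G - S$; in particular $a,b \notin S$ and $ab$ is not an edge of $G$ (else $a$ and $b$ would be adjacent in $G - S$).

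Next I would use semi-saturation to produce a new copy of $F$ in $G + ab$ that necessarily uses the edge $ab$. Let $F_0$ be the connected component of that copy which contains $ab$; by hypothesis $F_0$ is $k$-connected. Since $ab \in E(F_0)$, the graph $F_0$ has at least $k+1$ vertices, and Menger's theorem gives $k$ internally vertex-disjoint $a$-$b$ paths in $F_0$. One of these paths is the single edge $ab$; the remaining $k-1$ paths avoid the edge $ab$ and hence lie entirely in $F_0 - ab \subseteq G$.

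To finish, I would observe that any $a$-$b$ separator in $G$ must contain an internal vertex from each of these $k-1$ internally vertex-disjoint paths, and these internal vertices are distinct. Since $S$ separates $a$ from $b$ in $G$, this forces $|S| \geq k-1$, contradicting $|S| \leq k-2$.

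The main obstacle I anticipate is purely bookkeeping: ensuring that the copy of $F$ produced by semi-saturation yields a single connected, $k$-connected piece containing both endpoints of $ab$ (so that Menger's theorem applies cleanly), and verifying that the $k-1$ non-trivial paths inherited from $F_0$ really do survive inside $G$ rather than requiring the new edge. The minimum degree bound from Fact~\ref{deltaFlem} handles the corner case where $G$ might otherwise be too small for $(k-1)$-connectivity to be meaningful.
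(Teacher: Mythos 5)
Your proof is correct and takes essentially the same route as the paper: use semi-saturation to force a new copy of $F$ through the added edge $ab$, apply Menger's theorem to the $k$-connected component containing $ab$, and observe that $k-1$ of the internally disjoint $a$-$b$ paths survive in $G$. Your version is in fact slightly more careful than the paper's (restricting to the component of the copy containing $ab$, and checking $|V(G)|\geq k$ via the minimum-degree bound), but the underlying argument is identical.
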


\begin{proof}
	For any non-adjacent vertices $x$ and $y$ in $G$, $G+xy$ has a copy of $F$ that uses $xy$. 
	Since each component of $F$ is $k$-connected, by Menger's theorem, there are $k$ vertex-disjoint $x$-$y$-paths in $F$, and thus too in $G+xy$. 
	Hence we have $k-1$ vertex disjoint $x$-$y$-paths in $G$. 
	Since this is true for arbitrary non-adjacent $x$ and $y$ in $G$, $G$ is $(k-1)$-connected. \end{proof}

\begin{lem} \label{kedgeconnected} 
	If every connected component of $F$ is $k$-edge-connected and $G$ is $F$-cov-sat, then $G$ is $(k-1)$-edge-connected.
\end{lem}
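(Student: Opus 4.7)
The plan is to adapt the preceding lemma's argument to the edge-connectivity setting, using \emph{both} halves of the dom-sat hypothesis rather than semi-saturation alone. By the edge version of Menger's theorem, it suffices to show that $\lambda_G(x,y) \geq k-1$ for every pair of distinct vertices $x,y \in V(G)$, where $\lambda_G(x,y)$ denotes the maximum number of pairwise edge-disjoint $x$--$y$-paths in $G$.

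I would fix such a pair $x,y$ and split into two cases. If $xy \in E(G)$, then $F$-domination supplies a copy of $F$ in $G$ whose edge set contains $xy$; letting $F'$ be the connected component of this copy containing $xy$, the $k$-edge-connectivity of $F'$ yields $k$ pairwise edge-disjoint $x$--$y$-paths, all of them entirely inside $G$. If $xy \notin E(G)$, then $F$-semi-saturation supplies a copy of $F$ in $G+xy$ that uses the edge $xy$, and the analogous component $F'$ is still $k$-edge-connected; deleting $xy$ from $F'$ can drop $\lambda(x,y)$ by at most one, so $\lambda_{F'-xy}(x,y) \geq k-1$, and these $k-1$ paths now live inside $F' - xy \subseteq G$.

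In either case $\lambda_G(x,y) \geq k-1$, so $G$ is $(k-1)$-edge-connected. I do not expect any deep obstacle; the main thing to notice---and the sole real difference from the preceding lemma---is that both halves of the dom-sat hypothesis are genuinely required: semi-saturation controls the non-adjacent pairs exactly as in the vertex-connectivity proof, but adjacent pairs, which are vacuous in that setting, must here be handled through the domination half. The only place where care is needed is to apply Menger inside a single $k$-edge-connected component $F'$ of the chosen copy of $F$, rather than across the whole copy.
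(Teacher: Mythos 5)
Your proof is correct and follows the same route as the paper: adjacent pairs are handled via $F$-domination and edge-Menger within a $k$-edge-connected component of the copy of $F$, while non-adjacent pairs reuse the semi-saturation argument from the vertex-connectivity lemma, losing at most one path when the added edge is deleted. The paper's proof is just a terser statement of exactly this two-case split.
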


\begin{proof}
	Adjacent vertices in $G$ are connected by $k$ edge-disjoint paths in $G$ because $G$ is $F$-covered. 
	Non-adjacent vertices in $G$ are dealt with using the fact that $G$ is $F$-semi-saturated, in the fashion of the previous proof. 
\end{proof}


The natural interaction between coverage and semi-saturation demonstrated in Fact~\ref{trans}(b) and Lemma~\ref{kedgeconnected} is part of our motivation for introducing coverage into the rich area of graph saturation. 
We quantify how small of graphs can be $F$-cov-sat, the analogue to $\sat(n,F)$, with the following cov-sat values.

\begin{defn}
For $|F| \leq n$, the \emph{cov-sat number} of $F$ is 
\[ \cs(n,F) = \min(|E| : |V| = n, G = (V,E) \text{ is } F \text{-cov-sat}) ; \]
and the \emph{(asymptotic) cov-sat density} of $F$ is
\[ \cs(F) = \lim_{n\rightarrow \infty} \frac{\cs(n,F)}{n}  \]
when the limit exists. 
We use $\u{\cs}(F)$ and $\o{\cs}(F)$ for the limit infimum and limit supremum, respectively. 
\end{defn}

We will refer to graphs that realize that cov-sat number of $F$ as \emph{extremal} for $F$.
Graphs in a family that realizes the cov-sat density of $F$ are \emph{asymptotically extremal} for $F$, though the individual graphs may not be extremal for $F$. 

A few basic bounds on the cov-sat values follow from the above definitions and observations.

\begin{thm} \label{deltaF}
	If every connected component of $F$ has at least two edges, then $\u{\cs}(F) \geq \delta(F)/2$.
\end{thm}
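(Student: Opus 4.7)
The plan is to extract a per-vertex degree lower bound on any $F$-dom-sat graph $G$ on $n$ vertices, then combine via the handshake lemma. The induced subgraph $G'$ on the non-isolated vertices of $G$ inherits $F$-domination, because any copy of $F$ in $G$ must avoid isolated vertices. Applying Fact~\ref{deltaFlem}(a) to $G'$ then gives $\delta(G') \geq \delta(F)$; equivalently, every non-isolated vertex of $G$ has degree at least $\delta(F)$.

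The main obstacle is controlling the isolated vertices of $G$, since they contribute nothing to the edge count but do contribute to $n$. I would show that $G$ contains at most one isolated vertex. Suppose $v_1$ and $v_2$ are both isolated in $G$. Since $G$ is $F$-semi-saturated, $G + v_1v_2$ contains a new copy of $F$, and the only new edge available is $v_1 v_2$, so this copy must use $v_1 v_2$. In $G + v_1v_2$, however, $v_1$ and $v_2$ each have degree one with the other as their sole neighbor, so the connected component of $F$ whose image contains the edge $v_1 v_2$ must itself be the single edge $K_2$. This contradicts the hypothesis that every component of $F$ has at least two edges.

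Combining the two steps, for every $F$-dom-sat graph $G$ on $n$ vertices we get $2|E(G)| = \sum_v \deg_G(v) \geq \delta(F)(n - 1)$. Therefore $\ds(n, F) \geq \delta(F)(n-1)/2$, and taking the limit infimum as $n \to \infty$ yields $\u{\ds}(F) \geq \delta(F)/2$. The component hypothesis is the linchpin of the argument: it is used precisely to preclude an adversarial construction in which many isolated vertices drag the average degree below $\delta(F)/2$.
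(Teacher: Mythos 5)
Your proof is correct and follows essentially the same route as the paper's: it combines Fact~\ref{deltaFlem}(a) with the observation that an $F$-dom-sat graph has at most one isolated vertex when $F$ has no isolated edges, then sums degrees. You merely spell out the details the paper leaves implicit (restricting to the non-isolated vertices and justifying the one-isolated-vertex claim via semi-saturation), which is a welcome expansion rather than a different argument.
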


\begin{proof}
	This follows immediately from Fact~\ref{deltaFlem} in the case that $\delta(F) \geq 2$. 
	If $\delta(F) = 1$, observe that any $F$-cov-sat graph has at most one isolated vertex, since $F$ has no isolated edges. 
\end{proof}

\begin{fact} \label{FcovG}
	If $G$ is $F$-covered then $\cs(n,F) \leq \cs(n,G)$ for all $n \geq |G|$.
\end{fact}

\begin{proof}
	Fact~\ref{trans}(c)
\end{proof}

%

\section{Upper Bounds on $\cs(F)$} 


%

From Fact~\ref{FcovG} and Examples~~\ref{sat}(c) and~\ref{cov}, we have that $\o{\cs}(F) \leq \o{\cs}(K_r)$ for all graphs $F$ with at most $r$ vertices. 
Whereas the Zykov-Erd\H{o}s-Hajnal-Moon theorem gives us that $\sat(K_r) = r-2$, we find a slightly larger value for $\cs(K_r)$, which gives us a general upper bound for cov-sat density. 

\begin{thm} \label{csKr}
For the clique $K_r$ on $r \geq 3$ vertices,
\[ \cs(K_r) = r - \frac{3}{2} . \]
\end{thm}

\begin{proof} 
For two graphs $G$ and $H$, let $G+H$ be the graph formed by taking a disjoint union $G \cup H$ and adding an edge $gh$ for every $(g,h) \in V(G)\times V(H)$. 
Set $M^*_{2k} = kK_2$ (the disjoint union of $k$ edges) and $M^*_{2k+1} = K_3 \cup (k-1)K_2$. 
Let $C(n,r) = K_{r-2} + M^*_{n-(r-2)}$, pictured in Figure~\ref{fig:csKr}. 
(This is an homage to the Tur\'an graph, with ``C'' for ``Coverage'' in place of ``T''.) 
First observe that $C(n,r)$ is $K_r$-covered and $K_r$-semi-saturated. 
Therefore, the number of edges in $C(n,r)$ gives and upper bound on the cov-sat number of $K_r$:
\[ \cs(K_r,n) \leq \binom{r-2}{2} + (r-2)(n-r+2) + \ceil{\frac{n - r + 2}{2}} + (n-r)\%2, \]
where $a\%b$ is the remainder when $a$ is divided by $b$.
Taking the limit of this bound divided by $n$, we have $\o{\cs}(K_r) \leq (r-2) + 1/2$. 

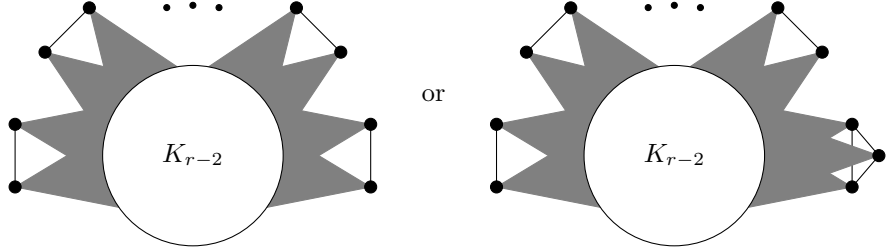
\begin{figure}[ht]
\begin{tikzpicture}[scale=.8]
	\foreach \x in {180, 135, 45, 0} {
		\filldraw[black!50!white] (\x-35:1.5) -- (\x+15:1.5) -- (\x-10:3) -- (\x-35:1.5);
		\filldraw[black!50!white] (\x-15:1.5) -- (\x+35:1.5) -- (\x+10:3) -- (\x-15:1.5);
		\draw (\x-10:3)--(\x+10:3);
		\filldraw (\x+10:3) circle(1mm);
		\filldraw (\x-10:3) circle(1mm);
	}
	\foreach \x in {35,45,55} {
		\filldraw (45+\x:2.5) circle(.5mm);
	}
	\filldraw[fill=white] (0:0) node{$K_{r-2}$} circle(1.5);
	
	\draw (4,1) node{or};
	
	\begin{scope}[xshift=8cm]
		\foreach \x in {180, 135, 45, 0} {
			\filldraw[black!50!white] (\x-35:1.5) -- (\x+15:1.5) -- (\x-10:3) -- (\x-35:1.5);
			\filldraw[black!50!white] (\x-15:1.5) -- (\x+35:1.5) -- (\x+10:3) -- (\x-15:1.5);
			\draw (\x-10:3)--(\x+10:3);
			\filldraw (\x+10:3) circle(1mm);
			\filldraw (\x-10:3) circle(1mm);
		};
		\foreach \x in {35,45,55} {
			\filldraw (45+\x:2.5) circle(.5mm);
		}
		
		\filldraw[black!50!white] (0:3.4) -- (-30:1.5) -- (30:1.5) -- (0:3.4);
		\draw (10:3)--(0:3.4)--(-10:3);
		\filldraw (0:3.4) circle(1mm);
		
		\filldraw[fill=white] (0:0) node{$K_{r-2}$} circle(1.5);
	\end{scope}
\end{tikzpicture}
\caption{Asymptotically extremal graphs $C(n,r)$ for $K_r$, with $n \equiv r \mod{2}$ on the left and $n \equiv r+1 \mod{2}$ on the right.} \label{fig:csKr}
\end{figure}

To show this is best possible, we will get a lower-bound count on the number of edges in two different ways. 
Let $\delta = \delta(G)$ be the minimum degree of a $K_r$-cov-sat graph $G = (V,E)$, $n = |V|$, and $m = |E|$. 
Clearly $m \geq \delta n/2$. 

For any vertex $v \in V$, if $uv \not\in E$, then there are at least $r-2$ edges from $u$ to $N(v)$ since $G$ is $K_r$-semi-saturated. 
Moreover, since $G$ is $K_r$-covered, $u$ is incident to at least $r-1$ edges, but its $(r-1)$-th edge could be to another vertex outside $N(v)$. 
(See Figure~\ref{fig:csKr-lower}.)
Therefore, with $d = d(v)$,
\begin{eqnarray*}
	m & \geq & d + (n - 1 - d)\!\left[(r-2) + \frac{1}{2}\right] \\
	& = & (n-1)\!\left(r - \frac{3}{2}\right) - d\!\left(r-\frac{1}{2}\right) .
\end{eqnarray*}

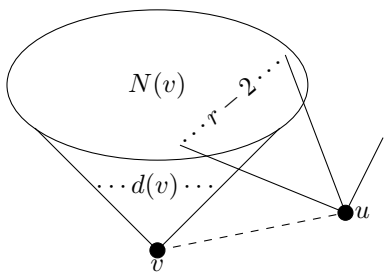
\begin{figure}[ht]
\begin{tikzpicture}
	\filldraw (0,0) circle(1mm) node[below]{$v$};
	\draw (135:2.3)--(0,0)--(45:2.3);
	\draw (0,2.2) ellipse(2 and 1) node{$N(v)$};
	\draw (0,.85) node{$\cdots d(v) \cdots$};
	\filldraw (2.5,.5) circle(1mm) node[right]{$u$};
	\draw (.3,1.4)--(2.5,.5)--(1.7,2.6);
	\draw(1,2)node[rotate=40]{$\cdots r-2 \cdots$};
	\draw (2.5,.5)--(3,1.5);
	\draw[dashed](0,0)--(2.5,.5);
\end{tikzpicture}
\caption{A look inside a $K_r$-cov-sat graph.} \label{fig:csKr-lower}
\end{figure}

Now we have that 
\[ m \geq \max\left( \frac{\delta n}{2}, (n-1)\!\left(r - \frac{3}{2}\right) - \delta\!\left(k-\frac{1}{2}\right) \right) .\]

For fixed $n$ and $r$, the former is increasing with $\delta$ and the latter is decreasing with $\delta$ so the maximum is minimized when 
\[ \frac{\delta n}{2} = (n-1)\!\left(r - \frac{3}{2}\right) - \delta\!\left(r-\frac{1}{2}\right),\]
which gives 
\[ \delta = \frac{(n-1)(2r-3)}{n + 2r - 5} . \]
This approaches $2r-3$ as $n$ approaches $\infty$, so 
\[ m \geq \frac{n(2r-3 + o(1))}{2} .\]

\end{proof}

\begin{cor} 
	For graph $F$, $\o{\cs}(F) \leq |F| - \frac{3}{2}$. 
\end{cor}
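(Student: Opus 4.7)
The plan is to derive this corollary by chaining together three results already established earlier in the excerpt. Let $r = |F|$ and assume $F$ has at least one edge (which is assumed globally in the paper). The key observation is that the complete graph on $r$ vertices dominates every graph on $r$ vertices, which then lets us transfer the clique bound to arbitrary $F$.

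First, I would invoke Example~\ref{dom}, which tells us that $K_r$ is $F$-dominated whenever $F$ has at least one edge and at most $r$ vertices; here equality holds. Second, applying Fact~\ref{FdomG} with $G = K_r$ yields
\[ \ds(n,F) \leq \ds(n,K_r) \qquad \text{for all } n \geq r. \]
Dividing both sides by $n$ and passing to the limit supremum, this gives $\o{\ds}(F) \leq \o{\ds}(K_r)$. Finally, Theorem~\ref{dsKr} establishes that the dom-sat density $\ds(K_r)$ exists and equals $r - \tfrac{3}{2}$ for $r \geq 3$, so in particular $\o{\ds}(K_r) = r - \tfrac{3}{2} = |F| - \tfrac{3}{2}$.

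For the degenerate case $r = |F| = 2$, we have $F = K_2$, and by Example~\ref{domsat} any single edge is $K_2$-dom-sat, so $\ds(n,K_2) = \lceil n/2 \rceil$ at worst (in fact much smaller), giving a density comfortably below $2 - \tfrac{3}{2} = \tfrac{1}{2}$, so the bound still holds. There is no real obstacle here: the corollary is essentially a one-line consequence of transitivity of domination combined with the clique calculation. The only point meriting care is the monotonicity step, i.e., confirming that taking $\limsup$ preserves the inequality $\ds(n,F) \leq \ds(n,K_r)$, which is immediate.
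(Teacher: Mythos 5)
Your proposal is correct and follows exactly the paper's route: the paper derives $\o{\ds}(F) \leq \o{\ds}(K_r)$ from Fact~\ref{FdomG} together with Examples~\ref{sat}(c) and~\ref{dom}, and then applies Theorem~\ref{dsKr}. Your separate treatment of $|F|=2$ (where in fact $\ds(n,K_2)=1$, so the density is $0$) is a reasonable extra precaution since the theorem is stated only for $r \geq 3$.
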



%
In a graph, a \emph{bridge} is an edge whose removal increases the number of connected components of the graph. 

\begin{thm} \label{bridge}
	If $F$ has a bridge, then $\o{\cs}(F) \leq \frac{|F|-1}{2}$.

	Moreover, if $F$ has a bridge $b$ such that every component of $F-b$ has at most $r$ vertices, then $\o{\cs}(F) \leq \frac{r(r-1) + 1}{2r}$. 
\end{thm}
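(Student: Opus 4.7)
The plan is to prove both parts by exhibiting, for each sufficiently large $n$, an $F$-dom-sat graph on $n$ vertices attaining the claimed density. The natural strategy is a disjoint-union construction: choose a small block graph $H$ with the right density, and take $G_n$ to be $\lfloor n/|H|\rfloor$ disjoint copies of $H$ (absorbing at most $|H|-1$ leftover vertices into one of the blocks).

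For the first bound $\o{\ds}(F) \le \frac{|F|-1}{2}$, take the block $H = K_{|F|}$. Each $K_{|F|}$ contains $F$, and by vertex-transitivity of the clique every edge of $K_{|F|}$ lies in some copy of $F$, so each block is $F$-dominated; each block is vacuously $F$-semi-saturated. For a cross-block non-edge $xy$ (with $x$ in block $X$ and $y$ in block $Y$), use the fact that $F$ has a bridge $b = uv$ with $F - b = C_u \cup C_v$: embed $C_u$ into $X$ with $u \mapsto x$ and $C_v$ into $Y$ with $v \mapsto y$, which fits because $|C_u|, |C_v| \le |F|-1$ and each $K_{|F|}$ is complete. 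Then $xy$ becomes the bridge of a new copy of $F$, verifying semi-saturation across blocks. The density is $\binom{|F|}{2}/|F| = (|F|-1)/2$.

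For the moreover part, let $b = uv$ be the specified bridge and take the block $H$ to be two disjoint copies of $K_r$ joined by a single bridge edge. Then $|H| = 2r$ and $|E(H)| = r(r-1)+1$, giving density $\frac{r(r-1)+1}{2r}$. Semi-saturation works as before: every vertex of $H$ lies in a $K_r$ large enough to host either $C_u$ or $C_v$, so cross-block non-edges are handled by the same bridge trick. For $F$-domination of $H$, the bridge edge of $H$ is directly covered by embedding $F$ with $b$ mapped to it and the components $C_u, C_v$ embedded in the two separate $K_r$'s, each of which fits because $|C_u|, |C_v| \le r$. For an internal edge $e$ of one of the $K_r$'s, embed $F$ with $b$ mapped to the bridge of $H$ and $u$ mapped to the bridge endpoint of that $K_r$, then use the completeness of $K_r$ to choose the embedding of $C_u$ so that $e$ appears as an edge of its image.

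The main obstacle is this last embedding step, which requires a case split based on whether $e$ is incident to the bridge-endpoint of $K_r$: in the incident case one needs an edge of $C_u$ incident to $u$, and in the non-incident case one needs an edge of $C_u - u$. When $C_u$ is too sparse near $u$ for one case, the fallback is to swap orientations and work with $C_v$, or to map $F$'s bridge $b$ directly to $e$ and distribute $C_u, C_v$ among the available clique vertices. Verifying that these alternatives cover every edge of every $K_r$-side simultaneously, using only the hypothesis $|C_u|, |C_v| \le r$, is where the bulk of the work lies.
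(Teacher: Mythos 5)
Your construction for the first bound is exactly the paper's (disjoint copies of $K_{|F|}$, with one slightly larger clique absorbing the leftover vertices), and your verification of domination and of cross-block semi-saturation via the bridge is correct and, if anything, more detailed than what the paper records. For the second bound you again propose the paper's construction: disjoint copies of the block $H$ consisting of two $K_r$'s joined by a single edge.

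The difficulty is the step you yourself flag as ``where the bulk of the work lies'': showing that every internal edge of each $K_r$ lies in a copy of $F$. This cannot be salvaged from the stated hypotheses, because $H$ is simply not $F$-dominated in general. Take $F = K_{1,3}$ with bridge $b = c\ell_1$ ($c$ the center); the components of $F-b$ have $1$ and $3$ vertices, so $r=3$ is admissible and $H$ is two triangles joined by an edge. A triangle edge $e$ not incident to the bridge endpoint has both endpoints of degree $2$ in $H$, while every edge of a claw is incident to its degree-$3$ center, so $e$ lies in no copy of $K_{1,3}$. None of your fallbacks apply: swapping $C_u$ and $C_v$ fails because $C_u$ is a single vertex and every edge of $C_v$ is incident to $c$, and mapping $b$ itself onto $e$ fails because $c$ requires degree $3$. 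Worse, the bound itself is false in this case: domination forces every edge to meet a vertex of degree at least $3$, and semi-saturation forces all but one vertex to have degree at least $2$, and a short counting argument then gives $\u{\ds}(K_{1,3}) \ge 6/5 > 7/6 = \frac{r(r-1)+1}{2r}$. So the ``moreover'' clause needs an additional hypothesis on $F$ (it does hold, for instance, for a path split at a middle edge, where a Hamiltonian path of $H$ covers every clique edge); as written, neither your argument nor the paper's can be completed.
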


\begin{proof}
	This upper bound is demonstrated by $G$ consisting of disjoint copies of the clique $K_{|F|}$ (and one clique $K_s$ with $|F| \leq s < 2|F|$). 
	$G$ is $F$-covered, because each component is $F$-covered. 
	$G$ is $F$-semi-saturated, because adding an edge between two of the cliques creates at least one copy of $F$ (many, in fact) since $F$ has a bridge. 

	With a bound of $r$ on the number of vertices in each component of $F-b$, we can improve the upper bound on $\cs(F)$. 
	This is demonstrated by disjoint copies of pairs of the clique $K_r$ where the two cliques in a pair are connected by a single edge.
\end{proof}

%
%

In the following theorems, we establish upper bounds on $\cs(F)$ for certain cases of $F$ with a small subgraph that has few neighbors in the rest of the graph. 

\begin{thm} \label{NuNw}
	Suppose $F$ has edge $uw$ with $|N(u) \cup N(w)| = k + 2$. Then
\[ \o{\cs}(F) \leq \left \{ \begin{matrix} 
	\displaystyle k + 1/2, & \delta(F) = k+1; \\ 
	\displaystyle k, & \text{otherwise.} 
\end{matrix}    \right. \]
\end{thm}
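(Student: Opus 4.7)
The plan is to exhibit, for each large $n$, an explicit $F$-dom-sat graph $G$ on $n$ vertices whose edge count matches the claimed bound. Set $S = (N_F(u) \cup N_F(w)) \setminus \{u,w\}$, so $|S|=k$, and let $T = V(F) \setminus (\{u,w\} \cup S)$ be the (possibly empty) set of vertices not adjacent to either $u$ or $w$ in $F$. Note first that $\delta(F) \leq k+1$ always, since $d_F(u) \leq |\{w\} \cup S| = k+1$.

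I would then split on $\delta(F)$. When $\delta(F)=k+1$, a short degree count from $|N(u)\cup N(w)|=k+2$ forces $d_F(u)=d_F(w)=k+1$ and $N_F(u)\cap S = N_F(w)\cap S = S$; that is, both $u$ and $w$ are joined to all of $S$. The construction is then to fix one copy $F_0^-$ of $F[S\cup T]$ with $S$-image $S_0$, and to attach roughly $(n-k-|T|)/2$ twin pairs $(u_i,w_i)$ in which both members of each pair are joined to all of $S_0$ and to each other. Each pair contributes exactly $2k+1$ edges, giving density $(2k+1)/2 = k+\tfrac{1}{2}$.

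When $\delta(F) \neq k+1$ (hence $\delta(F)\leq k$), pick any $v^*\in V(F)$ with $d_F(v^*)=\delta(F)$. Fix one copy $F_0$ of $F$ and attach $n-|F|$ twin pendants, each joined to the image of $N_F(v^*)$ in $F_0$. Each pendant has degree $\delta(F)\leq k$, so the density is at most $k$.

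For $F$-dominance, every edge of $F_0$ (or $F_0^-$) sits inside a copy of $F$ that uses a pendant (or pair) in place of $v^*$ (resp.\ $(u,w)$); every edge from a pendant to the core sits in the same kind of substitution. The main obstacle is $F$-semi-saturation, which I would verify by case-analysis on the non-edge $e$. For $e$ joining two pendants (in the $k+\tfrac{1}{2}$ case, from distinct pairs), I want $e$ to play the role of $uw$ in a new $F$-copy, with $S_0$ as the $S$-image and the $T$-part of the core as the $T$-image. For $e$ joining a pendant to a non-neighbour in the core, $e$ must play a different edge of $F$, and the attachment scheme has to be calibrated so that the re-embedding exists. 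Non-edges wholly inside the core are dealt with using pendants to complete the $F$-pattern.

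The hardest step will be checking semi-saturation for pendant-to-core non-edges in the $\delta(F)\leq k$ regime: for a generic choice of $v^*$ such an edge need not naturally sit in any copy of $F$. I expect this forces a careful refinement of the construction (for instance, attaching the pendants to $S_0$ rather than to $N_F(v^*)$, and packing in extra seed copies of $F$), so that semi-saturation is always witnessed by re-embedding the new edge into the $uw$-role.
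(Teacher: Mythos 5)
Your skeleton is right --- the same two-case split on whether $\delta(F)=k+1$, the same target densities, and the same idea of attaching pendant vertices (or adjacent pendant pairs) to a $k$-set playing the role of $S=(N(u)\cup N(w))\setminus\{u,w\}$ --- but there is a genuine gap exactly where you flag it, and it is not a technicality. With your core being a single copy of $F$ (or of $F[S\cup T]$), semi-saturation fails for non-edges from a pendant to a core vertex $y$: the new edge cannot play the role of $uw$ unless $y$ happens to be adjacent to all of $S_0$, and there is no reason it should play any other edge of $F$. Worse, in your $\delta(F)\le k$ regime even pendant-to-pendant non-edges are problematic, because you attach pendants to the image of $N_F(v^*)$ rather than to $S_0$, so two pendants joined by a new edge are not both adjacent to an $S$-image and cannot serve as $u,w$. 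Your proposed repairs (reattach to $S_0$, add extra seed copies) then break dominance of the pendants, since a degree-$k$ pendant glued to $S_0$ need not extend to a copy of $F$ inside a core that is merely one copy of $F$.

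The paper resolves all of this with one change: the core is a clique $K_{|F|}$ containing the $k$ attachment vertices, with every one of the remaining $n-|F|$ vertices joined to those $k$ vertices (plus a perfect matching on the outer vertices when $\delta(F)=k+1$). Then every non-edge of the graph has both endpoints adjacent to all $k$ attachment vertices (an outer endpoint by construction, a clique endpoint because the clique is complete), so every added edge can serve as $uw$ with $S$ mapped to the attachment vertices and $T$ mapped into the rest of the clique; and dominance of an outer vertex of degree $k$ is immediate by substituting it for a minimum-degree vertex of $F$ and embedding $F$ minus that vertex into the clique. If you replace your cores by this clique, your case analysis collapses to two easy checks and the edge counts you computed ($k$ per pendant, $2k+1$ per pair) go through unchanged.
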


\begin{proof}
Let $G_n$ be the $n$-vertex graph attained by fixing $k$ vertices in a copy of $K_{|F|}$, and adding an edge from every one of the $k$ vertices to every one of the other $n - |F|$ vertices. 
That is, $G$ is the result of attaching the complete bipartite graph $K_{n-|F|, k}$ to $k$ vertices in a clique $K_{|F|}$.
(See the left construction in Figure~\ref{fig:NuNw}.)
We see $G_n$ is $F$-semi-saturated (for all sufficiently large $n$) since any added edge would connect two of the $n-|F|$ vertices, and so serve as $uw$ in a new copy of $F$, with the rest of the vertices falling in the $|F|$-clique. 

So long as some vertex in $F$ has degree at most $k$, $G_n$ is also $F$-covered, giving the bound $\cs(F) \leq k$. 
The only issue is when $\delta(F) = k+1$ (then $u$ and $w$ have the same closed neighborhood). 
In this case, simply obtain an $F$-covered graph from $G_n$ by partitioning the $n-|F|$ vertices into pairs and adding an edge to each pair, thus increasing the asymptotic edge density by $1/2$. 
(See the right construction in Figure~\ref{fig:NuNw}.)

\begin{figure}[ht]
\begin{tikzpicture}[scale=.8]
	\draw (0,-.3) ellipse(1.6 and 2);
	\draw (0,-1.4) node[below]{$K_{|F|}$};
	\foreach \x in {170, 130, 50} {
		\filldraw[black!50!white] (\x-35:1.3) -- (\x+15:1.3) -- (\x-10:3) -- (\x-35:1.3);
		\filldraw[black!50!white] (\x-15:1.3) -- (\x+35:1.3) -- (\x+10:3) -- (\x-15:1.3);
		\filldraw (\x+10:3) circle(1mm);
		\filldraw (\x-10:3) circle(1mm);
	}
	\foreach \x in {35,45,55} {
		\filldraw (45+\x:2.5) circle(.5mm);
	}
	\filldraw[fill=white] (0:0) node{$K_{k}$} circle(1.3);
	
	\draw (3.4,0) node{vs.};
	
	\begin{scope}[xshift=8cm]
	\draw (0,-.3) ellipse(1.6 and 2);
	\draw (0,-1.4) node[below]{$K_{|F|}$};
		\foreach \x in {175, 130, 50} {
			\filldraw[black!50!white] (\x-35:1.3) -- (\x+15:1.3) -- (\x-10:3) -- (\x-35:1.3);
			\filldraw[black!50!white] (\x-15:1.3) -- (\x+35:1.3) -- (\x+10:3) -- (\x-15:1.3);
			\draw (\x-10:3)--(\x+10:3);
			\filldraw (\x+10:3) circle(1mm);
			\filldraw (\x-10:3) circle(1mm);
		};
		\foreach \x in {35,45,55} {
			\filldraw (45+\x:2.5) circle(.5mm);
		}
	\filldraw[fill=white] (0:0) node{$K_{k}$} circle(1.3);
	\end{scope}
\end{tikzpicture}
\caption{Graphs realizing the upper bounds in Theorem~\ref{NuNw}: $G_n$ for the general case on the left; for the special case when $\delta(F)=k+1$ on the right.} \label{fig:NuNw}
\end{figure}
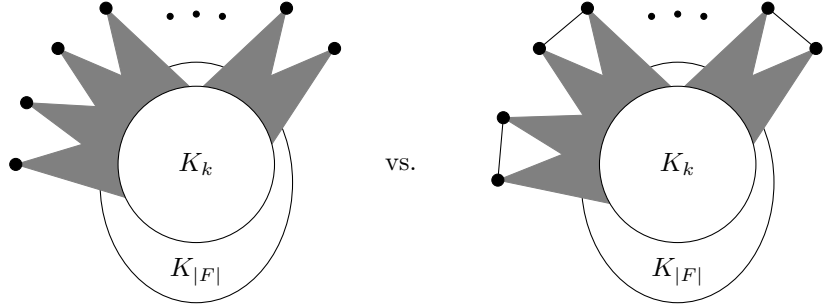

\end{proof}

The weaker bound of Theorem~\ref{NuNw} can be directly generalized to $F$ with two small disjoint vertex sets that only have one edge between them and together have a bounded number of neighbors in the rest of $F$.

\begin{thm}
	Suppose $F$ has disjoint vertex sets $U$ and $W$ such that $|U \cup W| = r$, $|e(U,W)| = 1$, and $(N(U) \cup N(W)) \setminus (U \cup W) = k$. 
	Then $\o{\cs}(F) \leq k + \frac{r-1}{2}$. 
\end{thm}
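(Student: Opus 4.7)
The plan is to generalize the construction from Theorem~\ref{NuNw} by replacing the role of the single edge $uw$ there with an $r$-vertex clique that can host $U \cup W$. Let $u_0 \in U$ and $w_0 \in W$ be the endpoints of the unique $U$--$W$ edge, and set $N_{\mathrm{ext}} = (N(U) \cup N(W)) \setminus (U \cup W)$, so $|N_{\mathrm{ext}}| = k$ and $|F| \geq r + k$. For $n \gg |F|$, build $G_n$ from a clique $K_{|F|}$ containing a distinguished $k$-set $A$ (with $B$ the remaining $|F| - k$ vertices), together with $n - |F|$ outer vertices partitioned into groups of size $r$ (absorbing any leftover vertices into one enlarged group). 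Make each group a clique and join $A$ completely to every outer vertex. A direct count gives $|E(G_n)| = \bigl(k + \tfrac{r-1}{2}\bigr)n + O(1)$, so it suffices to prove $G_n$ is $F$-dom-sat.

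For $F$-domination I would treat three edge types. Edges inside $K_{|F|}$ sit in an $F$-copy placed entirely in that clique. For an edge $vw$ inside a single outer clique $V$, embed $U \cup W$ into $V$ via $u_0 \mapsto v$, $w_0 \mapsto w$, send $N_{\mathrm{ext}}$ bijectively to $A$, and distribute the remaining $|F| - r - k$ vertices of $F$ into $B$; every $F$-edge is respected because $V$, $A$, $B$ are cliques and $A$ is complete to $V \cup B$. For an attachment-to-outer edge $av$, fix any edge $x_0 y_0 \in E(F)$ with $x_0 \in N_{\mathrm{ext}}$ and $y_0 \in U \cup W$ (which exists by the definition of $N_{\mathrm{ext}}$), and run the same embedding with $v \mapsto y_0$ and $a \mapsto x_0$.

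For $F$-semi-saturation, the non-edges split into two families. For a non-edge $xy$ between distinct outer cliques $X$ and $Y$, embed $u_0 \mapsto x$, $w_0 \mapsto y$, $U \setminus \{u_0\}$ into $X \setminus \{x\}$, $W \setminus \{w_0\}$ into $Y \setminus \{y\}$, $N_{\mathrm{ext}}$ onto $A$, and the rest of $F$ into $B$; the hypothesis $|e(U,W)| = 1$ is precisely what allows $U$ and $W$ to sit in disjoint cliques with no edges between them. For a non-edge $vb$ with $v$ outer and $b \in B$, embed $u_0 \mapsto v$, $w_0 \mapsto b$, $U \setminus \{u_0\}$ into $V \setminus \{v\}$ (where $V$ is $v$'s outer clique), $W \setminus \{w_0\}$ into $B \setminus \{b\}$, $N_{\mathrm{ext}}$ onto $A$, and the rest into $B$; the $F$-neighbors of $u_0$ land in $\{b\} \cup (V \setminus \{v\}) \cup A$, all of which lie in $N_{G_n + vb}(v)$, and symmetrically for $w_0$.

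The main obstacle is bookkeeping: one must check that the image sets have enough room ($|V \setminus \{v\}| = r - 1 \geq |U| - 1$; $B$ absorbs $W \setminus \{w_0\}$ together with the $|F| - r - k$ remaining vertices) and that the asymmetric structure of $F$ between $U$ and $W$---namely the absence of $U$--$W$ edges other than $u_0 w_0$---is compatible with the disjointness of the outer cliques in $G_n$. Both points reduce to the inequalities $|U|, |W| \leq r - 1$ and $|F| \geq r + k$ together with the hypothesis $|e(U,W)| = 1$, all of which are immediate.
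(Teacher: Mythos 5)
Your construction is exactly the one in the paper: the graph $G_n$ from Theorem~\ref{NuNw} (a $K_{|F|}$ with a $k$-set joined completely to the outer vertices) with the outer vertices partitioned into $r$-cliques, yielding density $k+\frac{r-1}{2}$. Your case analysis correctly fills in the domination and semi-saturation checks that the paper only asserts, so the argument is sound and essentially identical in approach (modulo the harmless notational slip of writing $v\mapsto y_0$, $a\mapsto x_0$ for the embedding in the reverse direction).
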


\begin{proof}
	Taking $G_n$ of the previous proof, partition the $n-|F|$ vertices into $r$-sets and add edges to those $r$-sets to form $r$-cliques. 
	This increases the asymptotic edge density by ${r \choose 2}/r = \frac{r-1}{2}$. 
	Since $|U \cup W| = r$ and $(N(U) \cup N(W)) \setminus (U \cup W) = k$, this new graph is still $F$-covered. 
	And since $|e(U,W)| = 1$, it is also $F$-semi-saturated. 
\end{proof}

\section{Graph Classes}

Having established preliminary theory and various structural bounds, let us investigate the cov-sat numbers for some fundamental classes of graphs: paths, cycles, and stars. 
For paths, we first need the following technical lemma. 

%
%

\begin{lem} \label{T3j}
Let $T$ be a tree on $t \geq 3$ vertices with $t < 3j$ for some $j$. 
Then either $T$ is a star or there exist distinct, non-adjacent vertices $u,v \in V(T)$ such that $T-u-v$ contains no $j$-vertex path with an endpoint in $N(u) \cup N(v)$.
\end{lem}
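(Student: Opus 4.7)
Assume $T$ is not a star, so its diameter satisfies $d \geq 3$; fix a longest path $P = x_0 x_1 \cdots x_d$. A crucial fact from the maximality of $P$: for each internal vertex $x_i$, any subtree hanging off $x_i$ outside $P$ has depth at most $\min(i, d - i)$ from $x_i$, since otherwise $P$ could be extended.

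My candidate vertices are $u = x_p$ and $v = x_q$ with $p < q$ and $q - p \geq 2$ (forcing non-adjacency), chosen so that the remaining path vertices split into three contiguous segments of size at most $j - 1$ each. Such a split exists because $d + 1 \leq t < 3j$ gives $d - 1 \leq 3(j - 1)$, so the $d - 1$ non-removed path indices distribute into three bins of capacity $j - 1$. I then aim to verify that every component of $T - u - v$ meeting $N(u) \cup N(v)$ has the relevant neighbor's eccentricity bounded by $j - 2$, equivalently that no $j$-vertex path starts from that neighbor. For the three path-anchored pieces this follows from each segment's size being $\leq j - 1$, combined with the branch-depth bound $\min(i, d - i)$ controlling the contribution of off-path vertices that get absorbed into a piece.

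The main obstacle is that any subtree attached at $x_p$ or $x_q$ itself becomes its own component of $T - u - v$, rooted at a vertex in $N(u) \cup N(v)$, with depth up to $\min(p, d - p)$ or $\min(q, d - q)$. For the naive balanced split this can be $j - 1$, exceeding the target bound of $j - 2$ by one. To overcome this I plan two moves: shift $p$ or $q$ to a nearby path index carrying no (or only a shallow) off-path branch, which is feasible because $t - (d + 1) < 3j - 1 - d$ strictly bounds how many path positions can support deep branches; or, in the residual configurations where a deep branch is forced, reroute by taking $u$ to be an interior vertex of the offending branch and $v = x_i$ where the branch attaches, so that the branch itself is cut in two and each resulting component has depth less than the original. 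The essential use of $t < 3j$ is in certifying that at least one of these adjustments is always available, and that is where I expect the technical heart of the argument to lie.
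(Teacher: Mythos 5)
Your write-up is a plan rather than a proof: the step you explicitly defer --- showing that $t<3j$ guarantees that one of your two adjustments (shifting $p$ or $q$ to an index without a deep branch, or rerouting $u$ into an offending branch) is always available --- is exactly the content of the lemma, and nothing in the proposal establishes it. The need for such adjustments is not exceptional; it is the generic situation. For example, take $T$ to be the path $x_0\cdots x_{2j-1}$ with a pendant path of $j-1$ vertices attached at $x_j$, so $t=3j-1<3j$. Here any choice with $j\notin\{p,q\}$ fails (a neighbour of $u$ or $v$ in the segment containing $x_j$ reaches $j$ or more vertices by walking to $x_j$ and descending the branch), while $p=j$ fails because the outer segment $x_0\cdots x_{j-1}$ then supports a $j$-vertex path from $x_{p-1}$; the working choices, such as $p=j-2,\ q=j$, are dictated by the branch structure in a way your balanced-split heuristic does not produce and your shift/reroute dichotomy does not locate without further argument. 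There is also a second, unacknowledged gap in the verification itself: for the two outer pieces the bound works because the branch depth $\min(i,d-i)\le i$ telescopes against the distance already travelled from $x_{p-1}$, but for the \emph{middle} piece a path starting at $x_{p+1}\in N(u)$ can run along $x_{p+1},\dots,x_i$ and then descend a branch of depth up to $\min(i,d-i)$, giving roughly $(i-p)+\min(i,d-i)$ vertices, which is not controlled by the middle segment's size being at most $j-1$. So even granting a balanced split, the middle segment is not verified.

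For contrast, the paper sidesteps the longest-path bookkeeping entirely. It orients each edge $e$ of $T$ away from a side of $T-e$ containing no $j$-vertex path; the unoriented edges form a subtree $U$ each of whose leaves certifies a disjoint set of at least $j$ vertices, so $t<3j$ forces $U$ to be a path with endpoints $u,w$, and then $T-u-w$ contains no $j$-vertex path at all (stronger than required). The only delicate case is $u$ adjacent to $w$, which is repaired by replacing $w$ with a suitable neighbour inside the smaller side. If you want to complete your approach, you would need to formulate and prove a precise selection rule for $(p,q)$ (possibly moving one vertex off $P$) and check all three components including the middle one; I suspect that analysis ends up reconstructing something equivalent to the orientation argument, so you may prefer to switch to it directly.
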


\begin{proof}
{\bf Case~1:} If $T$ is a star, there is nothing to prove. 
Henceforth, we can assume $T$ has a path on at least $4$ vertices. 

{\bf Case 2:} If there exists a vertex $u$ so that $T-u$ has no $j$-vertex path, let $v$ be any vertex not adjacent to $u$ and we are done. 

{\bf Case 3:} For every edge $e \in E(T)$, if one of the components of $T-e$ has no $j$-vertex path, orient the edge away from that component. 
(We need not worry about neither component having a $j$-vertex path as that was covered in Case 2.) 
Observe that the non-oriented edges form a connected subgraph $U$ of $T$ and that each component of the oriented subgraph has a unique sink that is a leaf of $U$. 
Let $ab$ be an edge in $U$ for some leaf $b$ of $U$. 
Then the component of $T-ab$ that contains $b$ has a $j$-vertex path, else $ab$ would have been oriented. 
Therefore, since $t < 3j$, $U$ has at most $2$ leaves. 
That is, $U$ is itself a path; call the endpoints $u$ and $w$. 
Since we are beyond Case 2, we can assume $u \neq w$. 
Note that $T-u-w$ contains no $j$-vertex path. 
The only remaining issue is if $u$ and $w$ are neighbors. 

{\bf Case 3b:} Assumes $U$ only consists of the edge $uw$. 
Without loss of generality, assume the component $W$ of $T-uw$ that contains $w$ has at most $t/2 < 3j/2$ vertices.
We know that every $j$-vertex path in $W$ contains $w$. 
If no $j$-vertex path in $W$ has $w$ as an endpoint (e.g., the left graph in Figure~\ref{fig:T3j}), then let $v$ be any neighbor of $w$ in $W$ and we are done.
Otherwise, let $v$ be such that some $j$-vertex path in $W$ has endpoint $w$ and final edge $vw$, so the component of $T-u-v$ containing $w$ has at most $(j+1)/2 < j$ vertices (e.g., the right graph in Figure~\ref{fig:T3j}).
\end{proof}

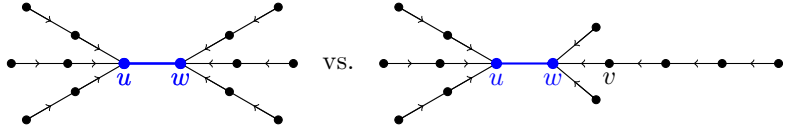
\begin{figure}[ht]
\begin{tikzpicture}[scale=.75]
\foreach \s in {-1,1}{
	\begin{scope}[xshift=\s*.5 cm]
		\foreach \t in {60,90,120}{
			\foreach \r in {0,1}{
				\filldraw  (\t - \s*90:\r) -- (\t - \s*90:\r + 1) circle(.07);
				\draw[very thin, ->] (\t - \s*90:\r + 1) -- (\t - \s*90:\r + .5);
			}
		}
	\end{scope}
	\filldraw[thick,blue] (-.5,0) node[below]{$u$} circle(.08) -- (.5,0) node[below]{$w$} circle(.08);
}

\draw (3.3,0) node{vs.};
	
\begin{scope}[xshift=6.6 cm]
	\begin{scope}[xshift=-.5 cm]
		\foreach \t in {150,180,210}{
			\foreach \r in {0,1}{
				\filldraw  (\t:\r) -- (\t:\r + 1) circle(.07);
				\draw[very thin, ->] (\t:\r + 1) -- (\t:\r + .5);
			}
		}
	\end{scope}
	\begin{scope}[xshift=.5 cm]
		\foreach \t in {-40,40}{
			\filldraw  (\t:0) -- (\t:1) circle(.07);
			\draw[very thin, ->] (\t:1) -- (\t:.5);
		}
	\foreach \x in {1,2,3,4}{
		\filldraw  (\x-1,0) -- (\x,0) circle(.07);
		\draw[very thin, ->] (\x,0) -- (\x-.5,0);
	}
	\draw (1,0) node[below]{$v$};
	\end{scope}
	\filldraw[thick,blue] (-.5,0) node[below]{$u$} circle(.08) -- (.5,0) node[below]{$w$} circle(.08);
\end{scope}

\end{tikzpicture}
\caption{For the proof of Lemma~\ref{T3j}, examples of trees $T$ with $t = 14$ and $j = 5$ in which $U = T[\{u,w\}]$: on the left, observe how every $5$-vertex path in $T-u$ contains $w$ but not as an end-point; on the right, observe how the component of $T-u-v$ that contains $w$ has $3 \leq (j+1)/2$ vertices.} \label{fig:T3j}
\end{figure}

%
%

K\'aszonyi and Tuza~\cite{KT-86} found the saturation number for paths for all sufficiently large $n$. Their result gives, for $r \geq 3$,
\[ \sat(P_r) = \left\{ \begin{matrix} 
	1 - \frac{1}{2\cdot2^j - 2} & \text{if }r = 2j+1;\\
	1 - \frac{1}{3\cdot2^j - 2} & \text{if }r = 2j+2.
 \end{matrix}\right. \]
 We attain a reminiscent result---parity dependent and approaching $1$ monotonically from below as $j$ grows---for the cov-sat density of paths. 

\begin{thm} For path $P_r$ on $r \geq 3$ vertices, 
\[ \cs(P_r) = \left \{ \begin{matrix} 
	1 - \frac{1}{3j}, & r = 2j+1; \\ 
	1 - \frac{1}{3j + 1}, & r = 2j+2. 
\end{matrix}    \right. \]
\end{thm}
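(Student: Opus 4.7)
The plan is to establish matching upper and lower bounds on $\ds(n,P_r)$: namely $(1-\frac{1}{3j})n+O(1)$ when $r=2j+1$, and $(1-\frac{1}{3j+1})n+O(1)$ when $r=2j+2$. For the upper bound, I would take as the extremal construction a disjoint union of paths---copies of $P_{3j}$ when $r$ is odd and copies of $P_{3j+1}$ when $r$ is even, with at most one slightly longer component absorbing any leftover vertices. Each tree component on $t$ vertices contributes $t-1$ edges, giving the asymptotic density $(t-1)/t$ with $t\in\{3j,3j+1\}$. Domination is immediate: every edge of $P_t$ lies in some $P_r$-subpath once $t\geq r$. For semi-saturation, adding any non-edge $v_av_b$ must produce a new $P_r$, and a case analysis shows that at least one of three constructions always succeeds: (A)~a subpath crossing the new edge directly, (B)~a path that traverses the newly-formed cycle and anchors at the left end of $P_t$, or (C)~the analogous right-anchored path; these cover every non-edge precisely when $t\geq 3j$ (odd) or $t\geq 3j+1$ (even). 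Edges added between two distinct components are handled by a similar ``longer side'' extension.

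For the lower bound, I would argue componentwise: non-tree components have density at least $1$, so modulo a few isolated vertices (of which at most one can exist, else adding an edge between two isolated vertices produces no $P_r$), it suffices to show that every tree component $T$ has at least $t^{*}:=3j$ (odd) or $3j+1$ (even) vertices; summing then yields $|E(G)|\geq (1-\frac{1}{t^{*}})n-O(1)$. To prove the tree-size claim, assume $|V(T)|=t<t^{*}$. Since $T\supseteq P_r$ with $r\geq 3$ we have $t\geq 3$, and since no star on $\geq 4$ vertices contains $P_r$ for $r\geq 4$, $T$ is not a star. Lemma~\ref{T3j} applied with parameter $j$ then supplies non-adjacent $u,v\in V(T)$ such that $T-u-v$ contains no $j$-vertex path with an endpoint in $N(u)\cup N(v)$. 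Semi-saturation at the non-edge $uv$ forces a new $P_r$ through $uv$; splitting it at $uv$ yields vertex-disjoint paths $A,B\subseteq T$ ending at $u,v$ with $|A|+|B|=r$. Deleting $u$ from $A$ and $v$ from $B$ produces paths in $T-u-v$ with endpoints in $N(u),N(v)$, so the lemma forces $|A|,|B|\leq j$, whence $|A|+|B|\leq 2j$. This already contradicts $r=2j+1$.

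The main obstacle is the even case $r=2j+2$ at $t=3j$ exactly, since Lemma~\ref{T3j} with parameter $j$ requires $t<3j$, while applying it with parameter $j+1$ only yields $|A|+|B|\leq 2j+2=r$---saturating rather than strictly violating $|A|+|B|=r$. I would dispose of $t\leq 3j-1$ by the argument above (which gives $|A|+|B|\leq 2j<2j+2$) and handle $t=3j$ directly via the structure of the longest path $P$ in $T$. If $T=P_{3j}$, the non-edge $v_jv_{2j+1}$ fails all three constructions (A),~(B),~(C)---essentially the reason that $P_{3j}$ is absent from the even-$r$ upper-bound construction. If $T$ properly contains $P$ (so at most $j-2$ branch vertices hang off $P$), an analogously chosen near-midpoint non-edge in $P$, adapted to avoid the branches, again leaves every candidate $P_r$ in $T+uv$ at most $2j+1$ vertices long; a finite case analysis pinpoints a witness non-edge, completing the contradiction and the theorem.
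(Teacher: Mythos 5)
Your proposal follows essentially the same route as the paper: the same extremal construction (disjoint copies of $P_{3j}$, resp.\ $P_{3j+1}$, with one longer component absorbing leftovers), the same componentwise lower bound reducing the problem to a too-small acyclic component $T$, and the same appeal to Lemma~\ref{T3j} to produce a non-adjacent pair $u,v$ whose added edge lies on no $P_r$. Your odd case is complete and correct, and your three-construction verification of semi-saturation for the upper bound is a correct elaboration of what the paper only illustrates in a figure.

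The one substantive issue is the even case at $t=3j$, and here you have in fact spotted something the paper sweeps under ``the other case follows a nearly identical proof'': Lemma~\ref{T3j} with parameter $j$ requires $t<3j$, so it cannot exclude a tree component on exactly $3j$ vertices, while parameter $j+1$ only yields $|A|+|B|\le 2j+2=r$, which is no contradiction. Your proposed repair, however, remains a sketch. The case analysis you invoke is not literally finite---the trees on $3j$ vertices containing $P_{2j+2}$ form a family that grows with $j$---and the assertion that a ``near-midpoint non-edge adapted to avoid the branches'' always works is precisely the content that needs proving. A cleaner fix would be a mild strengthening of Lemma~\ref{T3j} tailored to the even case: for a non-star tree on $t\le 3j$ vertices there exist non-adjacent $u,v$ such that $T-u-v$ contains no two disjoint paths, one with an endpoint in $N(u)$ and one with an endpoint in $N(v)$, totalling $2j$ vertices (this is what is actually needed to cap paths through $uv$ at $2j+1$ vertices). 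As written, the boundary value $t=3j$ of the even case is the single incomplete step in your argument---a gap you share with, and at least diagnose more honestly than, the paper itself.
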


\begin{proof}
We will prove the first case, $r = 2j+1$. 
The other case follows a nearly identical proof. 

{\bf Claim:} Disjoint copies of $P_{3j}$ comprise an extremal graph for $P_{2j+1}$.

This proposed graph is clearly $P_r$-covered. To check semi-saturation, we need to consider two cases: an edge connecting two copies of $P_r$ and an edge connecting two non-neighbors within a single copy of $P_r$. The first case is trivial. We demonstrate the latter case in Figure \ref{fig:csPr}.

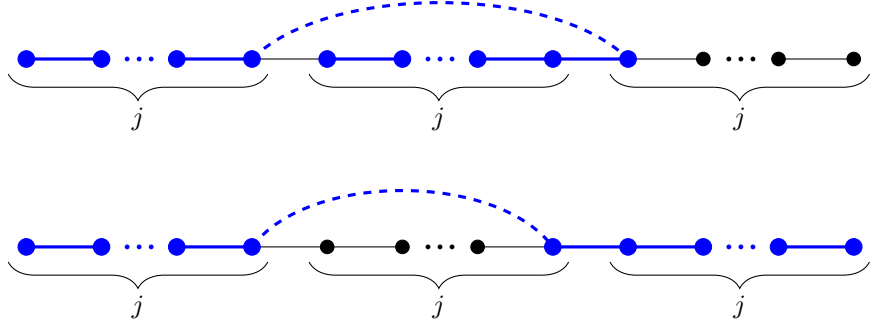
\begin{figure}[ht]
\begin{tikzpicture}
	\foreach \y in {0, 2.5} {
            	\foreach \x in {0,1,2,3,4,5,6,7,8,9,10,11} {
            		\filldraw (\x,\y) circle(.9mm);
            	};
            	\foreach \x in {0,2,3,4,6,7,8,10} {
            		\draw [thin] (\x,\y)--(\x+1,\y);
            	};
            	\foreach \dx in {-1,0,1} {
            		\filldraw[blue] (1.5 + .16*\dx, \y) circle(.25mm);
            		\filldraw (5.5 + \y*1.6 + .16*\dx, \y) circle(.25mm);
            		\filldraw[blue] (9.5 - \y*1.6 + .16*\dx, \y) circle(.25mm);
		};
	\foreach \x in {0,4,8} {
		\draw [decorate,decoration={brace,amplitude=10pt},xshift=-4pt,yshift=0pt] (\x+3.35,\y-.2) -- (\x-.1,\y-.2) node [black,midway,yshift=-6mm] {$j$};
	};
	};
		
	\draw [dashed, blue, very thick] (3,2.5) .. controls (4,3.5) and (7,3.5) .. (8,2.5);
	\foreach \x in {0,2,4,6,7} {
		\draw [very thick, blue] (\x,2.5)--(\x+1,2.5);
	};
	\foreach \x in {0,1,2,3,4,5,6,7,8} {
            		\filldraw [blue] (\x,2.5) circle(1.1mm);
	};
		
	\draw [dashed, blue, very thick] (3,0) .. controls (3.75,1) and (6.25,1) .. (7,0);
	\foreach \x in {0,2,7,8,10} {
		\draw [very thick, blue] (\x,0)--(\x+1,0);
	};
	\foreach \x in {0,1,2,3,7,8,9,10,11} {
            		\filldraw [blue] (\x,0) circle(1.1mm);
	};
\end{tikzpicture}

\caption{Demonstration that $P_{3j}$ is sufficiently long to be $P_{2j+1}$-semi-saturated: A new copy of $P_{2j+1}$ (blue, bolded) is obtained when an edge (dashed) is added, whether the newly adjacent vertices were far apart (top) or close together (bottom).}  \label{fig:csPr}
\end{figure}

It remains to show that the proposed graph does in fact minimize edge density. 
A graph with edge density less than $(3j-1)/(3j)$ would necessarily have acyclic components on $s$ vertices for some $r \leq s < 3j$. 
Take one such component $T$. 
Since $T$ is acyclic, we appeal to Lemma~\ref{T3j}.
If $T$ is a star, $P_r$-coverage fails for $r\geq 4$, and $r=3$ implies $j=1$ which contradicts $r < 3j$. 

Thus, we have vertices $u$ and $v$ such that $T-u-v$ contains no $j$-vertex path with an endpoint in $N(u) \cup N(v)$. 
Now if we try to extend edge $uv$ into a long path in $T+uv$, we can only possibly make a path on $2 + 2(j-1) < 2j+1$ vertices, contradicting $P_{2j+1}$-semi-saturation. 
\end{proof}

F\"uredi and Kim~\cite{FK-12} showed that 
\[ 1 + \frac{1}{r+2} \leq \u{\sat}(C_r) \leq \o{\sat}(C_r) \leq 1 + \frac{1}{r-4} \]
and conjectured the upper bound to be the true limit. 
With a similar construction to theirs we gain similar bounds for the cov-sat density of a cycle. 

\begin{thm} \label{Cr}
	For cycle $C_r$ on $r\geq 4$ vertices, $1 \leq \u{\cs}(C_r) \leq \o{\cs}(C_r) \leq 1 + \frac{1}{r-3}$.
\end{thm}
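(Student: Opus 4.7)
The plan is to obtain the lower bound as an immediate corollary of Theorem~\ref{deltaF} and then, for the upper bound, to construct an explicit family of $C_r$-dom-sat graphs with the claimed asymptotic density.

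For $\u{\ds}(C_r) \geq 1$, I would apply Theorem~\ref{deltaF} directly: $C_r$ is a single component with $r \geq 4 > 2$ edges, and $\delta(C_r) = 2$, so $\u{\ds}(C_r) \geq \delta(C_r)/2 = 1$.

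For the upper bound, mirroring F\"uredi and Kim's construction for $\sat(n,C_r)$, I would take $G_n$ to be a ``necklace'' of $k \approx n/(r-3)$ cycles $A_1, \ldots, A_k$, each isomorphic to $C_r$, arranged so that each consecutive pair $A_i, A_{i+1}$ (indices mod $k$) shares exactly a path on three vertices---two consecutive edges. Each cycle beyond the first contributes $r-3$ fresh vertices and $r-2$ fresh edges, so
\[ \frac{|E(G_n)|}{|V(G_n)|} \; \longrightarrow \; \frac{r-2}{r-3} \; = \; 1 + \frac{1}{r-3}. \]
$C_r$-domination is then immediate, since every edge of $G_n$ lies in the $C_r$ given by its host cycle $A_i$.

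The substantive work is verifying $C_r$-semi-saturation: for each non-edge $uv$ of $G_n$ I must exhibit a $u$-$v$ path of length exactly $r-1$, so that $G_n + uv$ contains a new $C_r$ through $uv$. If $u$ and $v$ lie in a common $A_i$, the two arcs between them within $A_i$ have lengths summing to $r$, so neither alone is of length $r-1$; however, the shared overlap of $A_i$ with $A_{i+1}$ (or $A_{i-1}$) affords a detour that swaps the 2-edge arc across the overlap for the $(r-2)$-edge arc around the neighboring cycle, shifting path length by $\pm(r-4)$. By choosing the appropriate arc within $A_i$ and optionally inserting such a detour, one tunes the total to $r-1$. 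For $u,v$ in distinct cycles, one concatenates arcs along the necklace, again exploiting the short-vs-long choice on each traversed cycle.

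The main obstacle will be making the length-tuning argument uniform across all non-adjacent pairs. The relative positions of the two overlap paths on each $A_i$ must be chosen so that the achievable arc-length combinations always include $r-1$, which may force specific spacing conventions and separate treatment of small $r$. Analogous parity and length bookkeeping appears in the F\"uredi--Kim analysis of $\sat(n,C_r)$, and one should expect a similar level of case analysis here---particularly for pairs $u, v$ that sit near or inside an overlap path, where several of the naive arc choices may collapse or coincide.
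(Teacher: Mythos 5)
Your lower bound is exactly the paper's argument and is fine. The upper bound, however, rests on a construction that cannot be made to work. For $C_r$-semi-saturation, adding a non-edge $uv$ must create a \emph{new} copy of $C_r$, and any such copy must use the edge $uv$; hence $G_n$ must contain a $u$-$v$ path of length exactly $r-1$, which in particular forces $d_{G_n}(u,v) \leq r-1$ for \emph{every} non-adjacent pair. Your necklace of $k \approx n/(r-3)$ overlapping $r$-cycles has diameter $\Theta(k) = \Theta(n)$: a vertex in $A_1$ and a vertex in $A_{\lfloor k/2 \rfloor}$ are at distance growing linearly in $n$, since every path between them must cross each intermediate cycle via its two $3$-vertex overlaps. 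So for large $n$ almost all non-adjacent pairs are too far apart, no arc-length tuning can help, and $G_n$ is not $C_r$-semi-saturated. This is not the ``bookkeeping obstacle'' you flag at the end; it is a structural obstruction to any construction of unbounded diameter.

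The paper avoids this by keeping the diameter bounded: it takes a clique $K_\ell$ with $r \leq \ell < 2r-3$, fixes two of its vertices, and attaches $\frac{n-\ell}{r-3}$ disjoint $(r-3)$-vertex paths, each joined by a matching from its two endpoints to the two fixed clique vertices. Each pendant path contributes $r-3$ vertices and $(r-4)+2 = r-2$ edges, giving the same limiting density $\frac{r-2}{r-3} = 1 + \frac{1}{r-3}$ that you computed, but now every vertex sits within bounded distance of the clique, and the clique supplies connecting paths of whatever length is needed to close a new $r$-cycle through any added edge. If you want to salvage your write-up, replace the necklace with this ``clique with pendant loops'' gadget; the density calculation you already did carries over unchanged.
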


\begin{proof} The lower bound comes from Theorem~\ref{deltaF} since $\delta(C_r)=2$. 
For the upper bound, considering the following construction on $n$ vertices, pictured in Figure~\ref{fig:csCr}. 
Fix two vertices on an $\ell$-vertex clique with $r \leq \ell < 2r-3$ and $ \ell \equiv n \mod{r-3}$. 
With the remaining $n-\ell$ vertices, take $\frac{n-\ell}{r-3}$ disjoint $(r-3)$-vertex paths. 
For each path, add a matching between the end-vertices of the path and the two fixed vertices of the clique. 
This $C_r$-cov-sat graph has, in the limit, edge density $\frac{r-2}{r-3}$. 

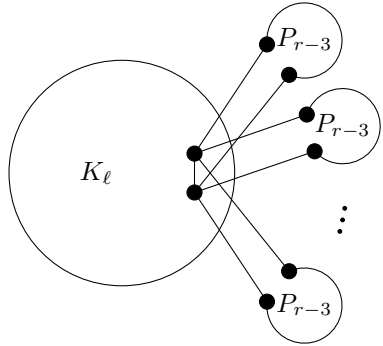
\begin{figure}[ht]
\begin{tikzpicture}
	\draw (0,0) circle(1.5)  node[left]{$K_\ell$} ;
	\filldraw (-15:1) circle(1mm) -- (15:1) circle(1mm);
	\foreach \theta in {36,12, -36} {
		\draw (\theta:3) node{$P_{r-3}$};
		\draw ([shift=(150+\theta:.5)]\theta:3) arc (150+\theta:-150+\theta:.5);
		\foreach \phi in {15,-15} {
			\filldraw ([shift=(10*\phi+\theta:.5)]\theta:3) circle(1mm);
			\draw (\phi:1) -- ([shift=(10*\phi+\theta:.5)]\theta:3);
		};
	};
	\foreach \theta in {-9,-12,-15} {
		\filldraw (\theta:3) circle(.25mm);
	};
\end{tikzpicture}

\caption{A (possibly asymptotically extremal) $C_r$-cov-sat graph.}  \label{fig:csCr}
\end{figure}

\end{proof}

The upper bound in Theorem~\ref{Cr} is realized by a clique with pendant loops on $r-3$ vertices. 
One might try to improve this bound by using longer loops. 
However, the graph is no longer $C_r$-semi-saturated when the loops have, for examples, $r-2$ vertices: Then an edge added between the corresponding vertex in two different loops is contained in no cycle shorter than $r+1$. 


For the star $K_{1,r}$ on $r+1$ vertices, K\'aszonyi and Tuza~\cite{KT-86} also found the saturation number for all $n$, giving
\[ \sat(K_{1,r}) = \frac{r-1}{2}, \]
which matches our lower bound for the cov-sat density of a star. 

\begin{thm} \label{K1r}
	For the star $K_{1,r}$ on $r+1 \geq 3$ vertices, 
	\[ \frac{r-1}{2} \leq \u{\cs}(K_{1,r}) \leq \o{\cs}(K_{1,r}) \leq \frac{r-1}{2} + \frac{4r-3}{8r-4} .\]
\end{thm}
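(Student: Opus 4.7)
For the lower bound, I would use a degree argument. In any $K_{1,r}$-dom-sat graph $G$, the copy of $K_{1,r}$ through an edge $uv$ must be centered at $u$ or $v$, so domination forces $\max\{d(u),d(v)\}\geq r$; a parallel analysis of the new copy created by inserting a non-edge gives $\max\{d(u),d(v)\}\geq r-1$ for every non-edge. Consequently, two distinct vertices each of degree at most $r-2$ cannot coexist: if they are adjacent the first inequality fails, and if not the second one does. Hence at most one vertex of $G$ has degree below $r-1$, giving $|E(G)|\geq \frac{(n-1)(r-1)}{2}$ and thus $\u{\ds}(K_{1,r})\geq \frac{r-1}{2}$.

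For the upper bound, I would first observe that
\[
\tfrac{r-1}{2}+\tfrac{4r-3}{8r-4}=\tfrac{r}{2}-\tfrac{1}{8r-4},
\]
which is exactly the edge density of an $r$-regular graph on $8r-4$ vertices with one edge removed. Call such a graph $H$; it exists because $r(8r-4)$ is even and $8r-4\geq r+1$. The candidate extremal family I would use consists of disjoint copies of $H$, with at most one slightly enlarged component absorbing the remainder when $n\not\equiv 0\pmod{8r-4}$, which does not disturb the asymptotic density.

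To confirm the construction is $K_{1,r}$-dom-sat: the only two vertices of $H$ with degree $r-1$ are the former endpoints of the deleted edge, and they are therefore non-adjacent; consequently every edge of $H$ has an endpoint of degree $r$, establishing domination. Every non-edge of the disjoint union (whether inside a single $H$ or across two copies) has both endpoints of degree at least $r-1$, so inserting it makes at least one endpoint have degree $r$; that endpoint centers a new $K_{1,r}$ using the freshly added edge as one of its arms, establishing semi-saturation.

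The main obstacle is identifying the right construction. Removing a single edge from an $r$-regular graph sits on the cusp of breaking domination, since any additional low-degree vertex adjacent to either of the two degree-$(r-1)$ vertices would leave an edge outside every $K_{1,r}$; and it sits on the cusp of breaking semi-saturation, since a third vertex of degree below $r-1$ would create a problematic non-edge. Handling $n$ not divisible by $8r-4$ is a minor padding issue and does not affect the asymptotic bound.
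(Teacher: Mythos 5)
Your proof is correct. The lower bound is essentially the paper's argument: degree constraints forced by semi-saturation (you additionally invoke domination, which sharpens ``fewer than $r$'' exceptional vertices to ``at most one,'' but the asymptotic conclusion $\u{\ds}(K_{1,r})\geq\frac{r-1}{2}$ is the same). The upper bound, however, is a genuinely different construction. The paper takes disjoint copies of the complete bipartite graph $K_{r-1,r}$, whose edge density is $\frac{r(r-1)}{2r-1}=\frac{r-1}{2}+\frac{2r-2}{8r-4}$; note that this is strictly smaller than the stated bound $\frac{r-1}{2}+\frac{4r-3}{8r-4}$, so the paper's construction actually proves a stronger inequality than the theorem asserts (the printed numerator $4r-3$ appears to be an arithmetic slip for $2r-2$). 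You instead reverse-engineered the printed bound, rewrote it as $\frac{r}{2}-\frac{1}{8r-4}$, and realized it with an $r$-regular graph on $8r-4$ vertices minus one edge; your checks are sound --- the two degree-$(r-1)$ vertices are non-adjacent, so every edge has a degree-$r$ endpoint (domination), and every non-edge has both endpoints of degree at least $r-1$ (semi-saturation) --- and the padding for $n\not\equiv 0\pmod{8r-4}$ is indeed routine (e.g., one clique component on at least $r+1$ vertices). The trade-off: your construction exactly saturates the bound as printed, so it validly proves the theorem, but it conceals the fact that the second term can be improved to $\frac{r-1}{4r-2}$, which the paper's $K_{r-1,r}$ family achieves and which is the construction one would want when trying to determine $\ds(K_{1,r})$ exactly.
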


\begin{proof}
	We get the lower bound from $K_{1,r}$-semi-saturation: Any added edge must be incident to a vertex of degree at least $r$, so a $K_{1,r}$-semi-saturated graph has only a few (fewer than $r$) vertices of degree less than $r-1$. 

	Since $K_{1,r}$ has a bridge, an upper bound of $r/2$ follows from Theorem~\ref{bridge}. 
	However, we can do slightly better using disjoint copies of the complete bipartite graph $K_{r-1,r}$, which has edge density $\frac{r\cdot(r-1)}{r + (r-1)} = \frac{r^2 - r}{2r-1}$. 
\end{proof}



Elegantly, both $\cs(P_r)$ and $\cs(C_r)$ approach $1$ as $r$ approaches infinity. 
That is, long paths and long cycles are similar with respect to the cov-sat invariant. 
On the other hand, stars have cov-sat number near the maximum possible (roughly half the number of vertices) for graphs with a bridge. 
Thus the cov-sat invariant clearly distinguishes the opposite extremes of trees: paths and stars. 
However, we see in the following theorem that this might not be the best way to view cover-saturation. 

\begin{thm} \label{StarPlus}
	Let $G_s$ be the $s$-vertex graph formed by appending an edge onto a leaf of the star $K_{1,s-2}$. 
	Then $1 - \frac{1}{s} \leq \u{\cs}(G_s) \leq \o{\cs}(G_s) \leq 1 - \frac{1}{2s-2}$.
\end{thm}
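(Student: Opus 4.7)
I would prove $\u{\ds}(G_s) \geq 1 - 1/s$ by a component count. Every edge of a $G_s$-dominated graph lies in a copy of $G_s$, so every non-trivial component has at least $s$ vertices. Because $G_s$ has at least two edges ($s \geq 3$), a $G_s$-semi-saturated graph has at most one isolated vertex: adding an edge between two isolated vertices yields only an isolated $K_2$, which contains no copy of $G_s$. Hence an $n$-vertex $G_s$-dom-sat graph has at most $1 + n/s$ components, so at least $n - 1 - n/s$ edges; dividing by $n$ gives the bound.

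\textbf{Upper bound construction.} Let $H$ be the double broom on $2s-2$ vertices: two adjacent centers $a, b$, with $a$ adjacent to $s-2$ pendant leaves $x_1, \ldots, x_{s-2}$ and $b$ adjacent to $s-2$ pendant leaves $y_1, \ldots, y_{s-2}$. Then $|E(H)| = 2s-3$, and both $a$ and $b$ have degree exactly $s-1$. I would exhibit as asymptotically extremal the disjoint union of $\lfloor n/(2s-2) \rfloor$ copies of $H$ (plus an unimportant adjustment for the remainder), and verify that this graph is $G_s$-dom-sat. Its edge density tends to $(2s-3)/(2s-2) = 1 - 1/(2s-2)$.

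\textbf{Verification.} For $G_s$-domination, each edge of $H$ lies in a copy of $G_s$ centered at $a$ (respectively $b$), with the $s-2$ leaves being $b$ (respectively $a$) together with $s-3$ of the $x$'s (respectively $y$'s), and the pendant being any $y_j$ (respectively $x_i$). For semi-saturation, the structural key is that each center has degree exactly $s-1$, one more than the $s-2$ required of a $G_s$-center, which lets me pick $s-2$ neighbors of $a$ that include any specified $x_i$ and exclude any other specified $x_{i'}$. Thus non-edges $x_i x_{i'}$ and $y_j y_{j'}$ become pendant edges of a new $G_s$ centered at $a$ or $b$; non-edges $x_i y_j$ become pendant edges with $a$ as center and all $x$-leaves chosen; and non-edges at a center (like $a y_j$) become star edges of a $G_s$ with one extra leaf choice.

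\textbf{Main obstacle.} The subtle point is cross-component non-edges in the disjoint union. This works because every vertex of $H$ is either a center or a leaf of a center, so every vertex has some neighbor of degree $s - 1 \geq s-2$ in its own copy. For a cross-component non-edge $z^{(i)} w^{(j)}$ with $i \neq j$, that neighboring center serves as the center of the new $G_s$: $z^{(i)}$ plays the leaf-with-pendant role and $w^{(j)}$ plays the pendant. This is precisely the property that disjoint copies of $G_s$ itself fail to have (the pendant $p$ of $G_s$ is adjacent only to the degree-$2$ vertex $v$, which cannot serve as a $G_s$-center for $s \geq 5$); the double broom adds exactly enough extra structure to beat the trivial bound of $1$ coming from Theorem~\ref{bridge}.
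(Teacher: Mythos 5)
Your proof is correct and follows essentially the same route as the paper: the lower bound via the observation that every non-trivial component of a $G_s$-dom-sat graph has at least $s$ vertices (with at most one isolated vertex), and the upper bound via disjoint copies of the double broom on $2s-2$ vertices, which is exactly the paper's graph $H_s$ (two copies of $K_{1,s-2}$ with their centers joined). Your explicit verification of domination and semi-saturation, including the cross-component non-edges and the remark on why disjoint copies of $G_s$ itself fail, supplies detail the paper leaves to the reader.
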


\begin{proof}
	For the lower bound, observe that (aside from at most one isolated vertex) every connected component of a $G_s$-cov-sat graph has at least $s$ vertices. 
	
	Let $H_s$ be the $(2s-2)$-vertex graph obtained by connecting the centers of two copies of the star $K_{1,s-2}$ (see Figure~\ref{fig:csGs}).
	The upper bound follows from observing that disjoint copies of $H_s$ form a $G_s$-cov-sat graph.
\end{proof}

\begin{figure}[ht]
\begin{tikzpicture}
	\draw (-1.4,0) node{$G_7$};
	\filldraw (0,0) circle(.1) -- (1,0) circle(.1) -- (2,0) circle(.1);
	\foreach \theta in {72, 144, 216, 288}{
		\filldraw (0,0)--(\theta:1) circle(.1);
	}
	
	\begin{scope}[xshift=5cm]
		\filldraw (0,0) circle(.1) -- (1.5,0) circle(.1);
		\foreach \theta in {70, 125, 180, 235, 290}{
			\filldraw (0,0)--(\theta:1) circle(.1);
		}
		\begin{scope}[xshift=1.5cm]
			\foreach \theta in {70, 125, 180, 235, 290}{
				\filldraw (0,0)--(\theta+180:1) circle(.1);
			}
		\end{scope}
		\draw (3.1,0) node{$H_7$};
	\end{scope}

\end{tikzpicture}

\caption{$G_7$ (left) and a $G_7$-cov-sat tree, $H_7$ (right).}  \label{fig:csGs}
\end{figure}
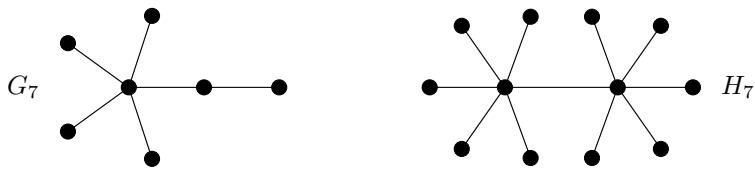

\section{Conclusion}
Coverage opens a natural saturation variant with numerous potential avenues of further research (see below). 
Moreover, cover-saturation give us a novel graph invariant, a sort of connectivity or centrality measure that is monotone with respect to the coverage relation. 
The cov-sat density of a graph $F$ can only be less than $1$ if $F$ has an acyclic component. 
On the other hand, the cov-sat density of $F$ is at most $|F|-3/2$, a bound realized by cliques. 

\subsection{Future Directions}
The sets of extremal graphs for the clique in general saturation and in Tur\'an theory, $\Sat(n,K_r)$ and $\Ex(n,K_r)$ respectively, each contain a unique graph (see~\cite{EHM-64} and~\cite{Tu-41}). 
Is this the case for the analogous set $\Cs(n,K_r)$ of graphs realizing $\cs(n,K_r)$? In particular, is $\Cs(n,K_r) = \{C(n,r)\}$ with the graph $C(n,r)$ as defined in Theorem~\ref{csKr}? 
Even if extremal graphs are not unique, all the present examples are highly symmetric (aside from some small set of vertices). 
Perhaps something can be said of the automorphism group of extremal graphs. 

It remains to investigate the relationship between the cov-sat number of a graph and other standard saturation numbers ($\sat$, $\ss$, or $\ws$). 
There may also be connections between cov-sat density and other graph measures, such as Wiener index.

Saturation has been extensively studied for families of graphs. 
In fact, Oleg Pikhurko~\cite{Pi-04} identified families $\mathcal{F}$ with as few as $4$ graphs such that $\u{\sat}(\mathcal{F}) \neq \o{\sat}(\mathcal{F})$. 
Additionally, many saturation results have been extended to the $k$-uniform hypergraph setting, where saturations numbers are no longer $O(n)$ but $O(n^{k-1})$. 
What theory arises when cover-saturation is considered for hypergraphs or families of (hyper)graphs? 

In 1991, the first graph saturation game was introduced by F\"uredi, Reimer, and Seress~\cite{FRS-91}, 
 in which two players take turns adding an edge to an initially empty vertex set, one with the aim of constructing an $F$-saturated graph as quickly as possible, and the other as slowly as possible. 
Clearly the game saturation number of $F$ (how long the game lasts when both players playing optimally) lies between $\sat(n,F)$ and $\ex(n,F)$. 
See the 2016 paper by Carraher et al.~\cite{CKRW-16} for a summary of known results on game saturation numbers. 
There may be interesting games to study when the objective of one or both players involves attaining or avoiding $F$-coverage. 


\section*{Acknowledgements}

We would like to thank Mike Ferrara and Claude Tardif for helpful suggestions.

\bibliography{RsrchRefs}{}
\bibliographystyle{plain}

\end{document}